\def\PP{{\mathcal P}}
\def\Ga{\Gamma}
\def\Si{\Sigma}
\def\Om{\Omega}
\def\a{\alpha}
\def\b{\beta}
\def\Aut{{\rm Aut}}
\def\val{{\rm val}}
\newcommand{\be}{\begin{equation}}
\newcommand{\ee}{\end{equation}}
\newcommand{\bea}{\begin{eqnarray}}
\newcommand{\eea}{\end{eqnarray}}
\newcommand{\bean}{\begin{eqnarray*}}
\newcommand{\eean}{\end{eqnarray*}}
\definecolor{VeryLightBlue}{rgb}{0.9,0.9,1}
\definecolor{LightBlue}{rgb}{0.8,0.8,1}
\definecolor{MidBlue}{rgb}{0.5,0.5,1}
\definecolor{DarkBlue}{rgb}{0,0,0.6}
\definecolor{Blue}{rgb}{0,0,1}
\definecolor{Gold}{rgb}{1,0.843,0}
\definecolor{LightGreen}{rgb}{0.88,1,0.88}
\definecolor{MidGreen}{rgb}{0.6,1,0.6}
\definecolor{DarkGreen}{rgb}{0,0.6,0}
\definecolor{VeryLightYellow}{rgb}{1,1,0.9}
\definecolor{LightYellow}{rgb}{1,1,0.6}
\definecolor{MidYellow}{rgb}{1,1,0.5}
\definecolor{DarkYellow}{rgb}{1,1,0.2}
\definecolor{DarkPurple}{rgb}{.6,0,1}
\definecolor{Red}{rgb}{1,0,0}
\definecolor{VeryLightRed}{rgb}{1,0.9,0.9}
\definecolor{LightRed}{rgb}{1,0.8,0.8}
\definecolor{MidRed}{rgb}{1,0.55,0.55}
\long\def\delete#1{}
\newtheorem{thm}{Theorem}[section]
\newtheorem{conj}[thm]{Conjecture}
\newtheorem{lem}[thm]{Lemma}
\begin{document}

\title{Nowhere-zero 3-flows in graphs admitting solvable arc-transitive groups of automorphisms
\thanks{Li was supported by the National Natural Science Foundation of China (11171129). Zhou was supported by a Future Fellowship of the Australian Research Council. Part of the work was done when Zhou visited Huazhong Normal University in 2011. The authors are grateful to an anonymous referee for his/her comments which led to improved presentation.}}
\author{\textbf{Xiangwen Li} \\
{\em Department of Mathematics, Huazhong Normal University}\\
{\em Wuhan 430079, China} \\ 
{\em E-mail: xwli68@mail.ccnu.edu.cn} \\ \\
\textbf{Sanming Zhou} \\ 
{\em Department of Mathematics and Statistics, The University of Melbourne} \\ 
{\em Parkville, VIC3010, Australia} \\
{\em E-mail: smzhou@ms.unimelb.edu.au}}

\date{}

\openup 1.0\jot

\maketitle

\vspace{-0.8cm}

\begin{abstract}
Tutte's 3-flow conjecture asserts that every 4-edge-connected graph has a nowhere-zero 3-flow. In this note we prove that every regular graph of valency at least four admitting a solvable arc-transitive group of automorphisms admits a nowhere-zero 3-flow.  

\medskip
\emph{Key words:} integer flow; nowhere-zero 3-flow; vertex-transitive graph; arc-transitive graph; solvable group 

\medskip
\emph{AMS Subject Classification (2010):} 05C21, 05C25
\end{abstract}

\section{Introduction}
\label{sec:int}

All graphs in this paper are finite and undirected, and all groups considered are finite. 
Let $\Gamma = (V(\Ga), E(\Ga))$ be a graph endowed with an orientation. For an integer $k \ge 2$, a \emph{$k$-flow} \cite{BM76} in $\Gamma$ is an integer-valued function $f: E(\Gamma)\rightarrow\{0,\pm 1,\pm2,\ldots,\pm (k-1)\}$ such that, for every $v\in V(\Gamma)$, 
$$
\sum_{e\in E^+(v)}f(e)=\sum_{e\in E^-(v)}f(e),
$$
where $E^+(v)$ is the set of edges of $\Ga$ with tail $v$ and $E^-(v)$ the set of edges of $\Ga$ with head $v$. 
A $k$-flow $f$ in $\Ga$ is called a \emph{nowhere-zero $k$-flow} if $f(e)\neq 0$ for every $e \in E(\Gamma)$. Obviously, if $\Gamma$ admits a nowhere-zero $k$-flow, then $\Gamma$ admits a nowhere-zero $(k+1)$-flow. It is also easy to see that whether a graph admits a nowhere-zero $k$-flow is independent of its orientation. The notion of nowhere-zero flows was introduced by Tutte in \cite{Tu54,Tu66} who proved that a planar graph admits a nowhere-zero 4-flow if and only if the Four Color Conjecture holds. The reader is referred to Jaeger \cite{Jaeger88} and Zhang \cite{Zhang97} for surveys on nowhere-zero flows and to \cite[Chapter 21]{BM76} for an introduction to this area. 

In \cite{Tu54,Tu66} Tutte proposed three celebrated conjectures on integer flows which are still open in general. One of them is the following well-known 3-flow conjecture (see e.g. \cite[Conjecture 21.16]{BM76}).

\begin{conj}
\label{conj-tutte}
(Tutte's 3-flow conjecture)
Every 4-edge-connected graph admits a nowhere-zero 3-flow.
\end{conj}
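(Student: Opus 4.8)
The plan is to reformulate the statement through Tutte's theorem that a graph admits a nowhere-zero $3$-flow if and only if it admits a \emph{mod-$3$ orientation}, i.e. an orientation in which $d^+(v) - d^-(v) \equiv 0 \pmod 3$ at every vertex $v$; it then suffices to orient an arbitrary $4$-edge-connected graph in this way. I would argue by contradiction, passing to a counterexample $G$ with the fewest edges, so that $G$ is $4$-edge-connected, has no nowhere-zero $3$-flow, and no proper reduction of $G$ inherits the obstruction. The natural tool for the reductions is $\mathbb{Z}_3$-\emph{connectivity} (group connectivity) in the sense of Jaeger, Linial, Payan and Tarsi: if $H \subseteq G$ is $\mathbb{Z}_3$-connected, then $G$ has a nowhere-zero $3$-flow as soon as the contraction $G/H$ does. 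Hence a minimal counterexample can contain no nontrivial $\mathbb{Z}_3$-connected subgraph — in particular no two parallel edges and no short $\mathbb{Z}_3$-connected configuration — which should pin down $G$ as a rather sparse, essentially simple, locally constrained graph.

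The main engine I would invoke is the inductive lifting-and-contracting technique of Thomassen, refined by Lovász, Thomassen, Wu and Zhang, which establishes the assertion under a stronger hypothesis. The device is to strengthen the inductive statement to one about graphs carrying a prescribed $\mathbb{Z}_3$-valued boundary $b$ on the vertices with $\sum_{v} b(v) = 0$, together with a bounded set of low-degree vertices, and to prove that a \emph{degree-constrained orientation} realising $b$ always exists. The induction then selects an edge or vertex, splits off a pair of edges or contracts a $\mathbb{Z}_3$-connected piece, checks that the smaller instance still satisfies the strengthened hypothesis, and lifts the orientation back. Each such reduction consumes a fixed amount of edge-connectivity to keep the reduced graph admissible, which is exactly why the argument naturally yields the conclusion only above some connectivity threshold rather than at the conjectured value $4$.

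This last point is the obstacle, and I expect it to be decisive. The known reductions — lifting at a vertex, splitting off, and contracting $\mathbb{Z}_3$-connected pieces — each demand a reservoir of edge-connectivity to guarantee that the resulting instance still meets the inductive hypotheses, and with the present arsenal one can push the argument through only down to edge-connectivity $6$. A genuinely $4$-edge-connected minimal counterexample is too sparse to admit any of these operations: no local structure is yet known that is simultaneously \emph{forced} by $4$-edge-connectivity and \emph{reducible} in the above sense. Producing such a reducible configuration — that is, closing the gap from $6$ to $4$ — is precisely the unresolved heart of the conjecture, and I do not see how to overcome it by these methods. The special-case results of this paper should be read as circumventing this difficulty rather than resolving it: a solvable arc-transitive group of automorphisms supplies exactly the extra structure needed to complete the orientation when the generic reductions are unavailable.
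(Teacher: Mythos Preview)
The statement you were asked to prove is labeled in the paper as \emph{Conjecture}~\ref{conj-tutte} --- Tutte's 3-flow conjecture --- and the paper does \emph{not} prove it. It is presented as an open problem; the paper's contribution is Theorem~\ref{thm:vt-3}, a special case for graphs admitting a solvable arc-transitive group of automorphisms, and even that proof leans on Theorem~\ref{thm:LTWZ} (the Lov\'asz--Thomassen--Wu--Zhang result for 6-edge-connected graphs) to dispose of all valencies except five.

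Your proposal is therefore not a proof but an accurate diagnosis of why no proof is available: you correctly recast the problem via mod-$3$ orientations and $\mathbb{Z}_3$-connectivity, correctly identify the Thomassen and Lov\'asz--Thomassen--Wu--Zhang machinery as the state of the art, and correctly observe that the inductive reductions require a connectivity reservoir that runs out at $6$ rather than $4$. Your own closing paragraph concedes the gap explicitly. That assessment is sound, but it means there is nothing here to grade as a proof --- you have (rightly) declined to claim one. There is also no ``paper's own proof'' to compare against, since none exists; the paper circumvents the obstacle, as you anticipate in your last sentence, by exploiting the quotient/multicover structure coming from the solvable group action rather than by attacking the conjecture in general.
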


This conjecture has been extensively studied in over four decades; see e.g. \cite{Fan081, Fan082, Lai3, Li12, Wu, Thom12, Yan11, YL11}. Solving a long-standing conjecture by Jaeger \cite{Jaeger79} (namely the weak 3-flow conjecture), recently Thomassen \cite{Thom12} proved that every 8-edge-connected graph admits a nowhere-zero 3-flow. This breakthrough was further improved by Lov\'{a}sz, Thomassen, Wu and Zhang \cite{Wu} who proved the following result.

\begin{thm}
\label{thm:LTWZ}
(\cite[Theorem 1.7]{Wu}) Every 6-edge-connected graph admits a nowhere-zero 3-flow.
\end{thm}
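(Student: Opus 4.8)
\section*{Proof proposal}

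The plan is to recast the problem as a statement about orientations and then prove a more robust strengthening by induction. By Tutte's theorem, a graph $\Ga$ has a nowhere-zero $3$-flow if and only if it has a \emph{modulo $3$-orientation}, that is, an orientation in which $d^+(v) \equiv d^-(v) \pmod 3$ at every vertex $v$; so it suffices to produce such an orientation in every $6$-edge-connected graph. Arguing directly about orientations is awkward under induction, since deleting or contracting edges destroys high edge-connectivity, so instead I would prove the group-connectivity version: \emph{every $6$-edge-connected graph is $\Z_3$-connected}, meaning that for every $b : V(\Ga) \to \Z_3$ with $\sum_{v} b(v) \equiv 0 \pmod 3$ there is an orientation $D$ of $\Ga$ with $d_D^+(v) - d_D^-(v) \equiv b(v) \pmod 3$ for all $v$ (the case $b \equiv 0$ recovers a modulo $3$-orientation). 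This is the formulation that is stable under the reductions below.

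The induction is on $|V(\Ga)| + |E(\Ga)|$, and the basic move is \emph{contracting a $\Z_3$-connected subgraph}: if $H$ is a connected subgraph of $\Ga$ that is itself $\Z_3$-connected, then $\Ga$ is $\Z_3$-connected as soon as $\Ga/H$ is, because a boundary $b$ on $\Ga$ induces a boundary on $\Ga/H$ summing to zero, and any orientation of $\Ga/H$ realizing the latter extends to an orientation of $\Ga$ with boundary $b$ by invoking the $\Z_3$-connectivity of $H$ to orient the remaining edges. A pair of parallel edges, $K_4$, and a handful of other small graphs provide the seeds. The whole difficulty is therefore to show that a $6$-edge-connected graph on more than one vertex must contain such a reducible configuration, or can otherwise be reduced to a smaller instance; high edge-connectivity alone does not hand one a small $\Z_3$-connected subgraph.

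Following Thomassen's breakthrough that every $8$-edge-connected graph has a nowhere-zero $3$-flow, the engine is a further strengthening that carries a distinguished vertex $z$ allowed to have arbitrarily large degree, the edge-connectivity hypothesis being imposed only away from $z$. With this stronger inductive statement one can locate a vertex $v \neq z$ whose degree equals the connectivity threshold, use Mader's splitting-off theorem to split off a suitable pair of edges at $v$ (or to peel off a short path or small subgraph around $v$) while preserving edge-connectivity away from $z$, check that $z$ and all the hypotheses survive in the smaller graph, apply induction, and lift the orientation back, fixing the values inside any contracted part via its own $\Z_3$-connectivity. Base cases --- $\Ga$ with few vertices, or $\Ga - z$ small --- are dispatched by hand. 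The passage from the threshold $8$ to the threshold $6$ asserted in Theorem~\ref{thm:LTWZ} comes from a more economical version of this reduction, in which the contracted pieces and the bookkeeping for how the boundary function transforms are arranged so that only $6$ units of edge-connectivity, rather than $8$, are consumed at each step.

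The main obstacle is exactly this last optimization. One must simultaneously (i) guarantee that every graph satisfying the distinguished-vertex hypothesis at the level of edge-connectivity $6$ contains a usable low-degree vertex or reducible configuration, (ii) verify that the splitting-off or contraction preserves both the edge-connectivity condition away from $z$ and the global boundary condition, and (iii) make the numerology close at $6$ rather than $8$. Reconciling these three requirements is the delicate combinatorial heart of the argument, and is where essentially all of the work resides.
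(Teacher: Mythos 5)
This statement is not proved in the paper at all: it is quoted verbatim from Lov\'{a}sz, Thomassen, Wu and Zhang (reference [Wu], Theorem 1.7) and used as a black box, so the only thing to compare your attempt against is the argument in that cited reference. Your outline does correctly identify the strategy of that work: pass from nowhere-zero $3$-flows to modulo $3$-orientations via Tutte's equivalence, strengthen the statement to $\mathbb{Z}_3$-connectivity (realizing an arbitrary zero-sum boundary function $b$) so that it becomes stable under contraction of $\mathbb{Z}_3$-connected subgraphs, and run an induction built around a distinguished vertex $z$ at which the edge-connectivity hypothesis is relaxed. All of that matches the architecture of Thomassen's weak-3-flow argument and its refinement in [Wu].

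However, what you have written is a road map, not a proof, and you say so yourself: the passage from threshold $8$ to threshold $6$, and indeed the entire inductive step --- locating a reducible configuration, performing the splitting-off or contraction, verifying that the connectivity-away-from-$z$ hypothesis and the boundary condition survive, and making the numerology close at $6$ --- is deferred with the remark that this is ``where essentially all of the work resides.'' That deferred part \emph{is} the theorem; without the precise inductive statement (in [Wu] this is a carefully calibrated hypothesis bounding $d(z)$ against the boundary function and requiring every other vertex to have degree at least $6$, together with a cut condition), the choice of what to contract, and the case analysis showing a minimal counterexample cannot exist, nothing has been established. There is also a factual wobble: the argument in [Wu] does not invoke Mader's splitting-off theorem in the main induction; it proceeds by contracting a suitable vertex subset chosen via a minimal-counterexample analysis of small edge-cuts and low-degree vertices. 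So the proposal should be regarded as a correct high-level description of the known proof with its entire technical content missing, rather than as a proof.
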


It is well known \cite{Watkins} that every vertex-transitive graph of valency $d \ge 1$ is $d$-edge-connected. Thus, when restricted to the class of vertex-transitive graphs, Conjecture \ref{conj-tutte} asserts that every vertex-transitive graph of valency at least four admits a nowhere-zero 3-flow. Due to Theorem \ref{thm:LTWZ} this is now boiled down to vertex-transitive graphs of valency $5$, since every regular graph with even valency admits a nowhere-zero 2-flow. In an attempt to Tutte's 3-flow conjecture for Cayley graphs, Poto\v{c}nik, \v{S}koviera and \v{S}kerkovski \cite{Potocnik05} proved the following result. (It is well known \cite{Biggs} that every Cayley graph is vertex-transitive, but the converse is not true.)

\begin{thm}
\label{thm:cay-abelian}
(\cite[Theorem 1.1]{Potocnik05})
Every Cayley graph of valency at least four on an abelian group admits a nowhere-zero 3-flow.
\end{thm}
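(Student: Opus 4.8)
The plan is to argue by cases on the valency $d = |S| \ge 4$ of the Cayley graph $\Gamma = \Cay(A,S)$, where $A$ is abelian and $S = -S \not\ni 0$. Since a nowhere-zero $3$-flow can be built independently on each connected component, I may assume $\Gamma$ is connected, so $\langle S \rangle = A$. First I would dispose of the even-valency case: a graph of even valency has all degrees even, so each component is Eulerian and carries a nowhere-zero $2$-flow, which is in particular a nowhere-zero $3$-flow. Next, for odd $d \ge 7$, I would invoke the fact (Watkins \cite{Watkins}) that a connected vertex-transitive graph of valency $d$ is $d$-edge-connected; thus $\Gamma$ is at least $7$-edge-connected, and Theorem \ref{thm:LTWZ} applies directly. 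This reduces everything to the single remaining case $d = 5$, where the edge-connectivity $5$ falls just short of the hypothesis of Theorem \ref{thm:LTWZ}.

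For $d = 5$ I would pass to the standard reformulation by modular orientations: $\Gamma$ has a nowhere-zero $3$-flow if and only if it admits an orientation with $d^{+}(v) \equiv d^{-}(v) \pmod 3$ at every vertex. Since $d^{+}(v) + d^{-}(v) = 5$, this forces $d^{+}(v) \in \{1,4\}$ for all $v$, and counting oriented edges forces exactly half of the vertices to have out-degree $4$ and the other half out-degree $1$. (The order $|A|$ is even: as $S$ has odd cardinality and is inverse-closed, it contains an involution $t$, so $2 \mid |A|$ by Cauchy's theorem and the halving is meaningful.) Thus the goal becomes to partition $A = A_{+} \cup A_{-}$ into two halves and orient the edges so that every vertex of $A_{+}$ has out-degree $4$ and every vertex of $A_{-}$ has out-degree $1$.

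To realize such an orientation I would exploit the abelian structure. The involution $t$ contributes a perfect matching $M$ on $A$, while the other generators give a $4$-regular graph $H = \Cay(A, S \setminus \{t\})$, which decomposes into two $2$-factors that can be oriented as directed cycles (out-degree $1$ each). The natural candidate for the partition is an index-$2$ subgroup $B = \ker \chi$ for a homomorphism $\chi \colon A \to \Z_2$ (which exists since $|A|$ is even), setting $A_{+} = B$ and $A_{-} = A \setminus B$, orienting each crossing edge (one with $\chi(s) = 1$) from $B$ to $A \setminus B$ and the within-coset edges along directed cycles. A bookkeeping of out-degrees shows that this scheme meets the targets precisely when the number $m$ of crossing generators equals $3$: then both within-coset subgraphs are $2$-regular, and the directed-cycle orientation yields within-coset out-degree $1$, giving totals $3+1 = 4$ on $B$ and $0+1 = 1$ on $A \setminus B$. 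Since the pairs $\{s,-s\}$ contribute an even amount to $m$, arranging $m = 3$ amounts to making $\chi$ send an odd number of the involutions in $S$ (which number $1$, $3$, or $5$) to $1$ while routing the generator pairs appropriately.

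The \emph{main obstacle} is exactly this last step: guaranteeing that a suitable $\chi$ exists in every configuration of $S$, or, failing that, a suitable partition verifying the degree-constrained orientation criterion $\sum_{v \in X} d^{+}(v) \ge |E(\Gamma[X])|$ for every $X \subseteq A$. This is delicate when every involution of $S$ lies in $2A$ — as happens, for instance, when $A$ has a direct factor $\Z_4$ whose involution is its square element — so that no homomorphism onto $\Z_2$ can send an involution to $1$, whence $m$ is always even and the clean scheme above is unavailable. To cover these exceptional configurations I would fall back on induction on $|A|$: pass to a quotient $A / \langle s \rangle$ for a well-chosen generator $s$, lift a nowhere-zero $3$-flow of the smaller Cayley graph obtained, and correct it along the two-element fibres using the matching $M$. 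Carrying out the subcase analysis by the number of involutions in $S$ and by how the generator pairs split across the chosen subgroup, and checking the orientation criterion in each, is where the real work lies.
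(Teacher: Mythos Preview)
The paper does not prove Theorem~\ref{thm:cay-abelian}: it is quoted verbatim from \cite{Potocnik05} and used as a black box, serving as the base case ($n(G)=1$) of the induction in Section~\ref{sec:proof}. So there is no ``paper's own proof'' to compare your proposal against.

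On the substance of your plan: the reductions are sound. Even valency is disposed of by Euler, and for odd $d\ge 7$ the combination of Watkins' edge-connectivity bound with Theorem~\ref{thm:LTWZ} is legitimate within this paper's framework (though note that \cite{Potocnik05} predates \cite{Wu} by eight years, so the original argument could not have taken this shortcut and must treat all odd valencies directly). Your translation of the $d=5$ case into a $\{1,4\}$-out-degree orientation problem, and the parity count forcing an even bipartition of $A$, are both correct.

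The genuine gap is exactly where you flag it. The homomorphism scheme requires $m=3$, hence an involution of $S$ lying outside $2A$, and this can fail: for instance $A=\Z_4\times\Z_2$ with $S=\{(2,0),(1,0),(3,0),(1,1),(3,1)\}$ is a connected $5$-valent abelian Cayley graph whose unique involution $(2,0)$ is a square, so every $\chi\colon A\to\Z_2$ gives even $m$. Your fallback---quotient by $\langle s\rangle$, lift a $3$-flow, ``correct along the two-element fibres using $M$''---is not yet an argument: the image of $S$ in the quotient may collapse (generators can coincide or hit $0$), the resulting valency may drop below $4$, and there is no general mechanism for lifting a nowhere-zero $3$-flow through a $2$-to-$1$ quotient while repairing it with a single perfect matching. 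The original paper \cite{Potocnik05} does resolve these obstructions, but it requires a more careful structural analysis of $S$ than your sketch indicates; as written, the $d=5$ case remains open in your proposal.
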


This was generalized by N\'{a}n\'{a}siov\'{a} and \v{S}koviera \cite{Nanasiova} to Cayley graphs on nilpotent groups. 

\begin{thm}
\label{thm:cay-nil}
(\cite[Theorem 4.3]{Nanasiova})
Every Cayley graph of valency at least four on a nilpotent group admits a nowhere-zero 3-flow.
\end{thm}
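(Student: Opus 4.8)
The plan is to induct on $|G|$, where $\Gamma=\Cay(G,S)$ with $S=S^{-1}\subseteq G\setminus\{1\}$ and $|S|\ge 4$; since a disconnected Cayley graph is a disjoint union of copies of a connected Cayley graph on $\langle S\rangle$, we may assume $\langle S\rangle=G$, and the base case in which $G$ is abelian is Theorem \ref{thm:cay-abelian}. Two elementary observations organise the argument. First, if $E(\Gamma)$ is partitioned into subgraphs each admitting a nowhere-zero $3$-flow — for instance each being either an even graph or a cubic bipartite graph — then $\Gamma$ admits a nowhere-zero $3$-flow, by superposing the partial flows (conservation at each vertex holds piece by piece). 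Second, a $d$-valent vertex-transitive graph is $d$-edge-connected (quoted in the introduction), so by Theorem \ref{thm:LTWZ} every Cayley graph of valency $\ge 6$ already has a nowhere-zero $3$-flow. It therefore suffices to handle valencies $4$ and $5$.

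When $S$ contains an even number of involutions — in particular whenever the valency is $4$ — the graph $\Gamma$ is an even graph and has a nowhere-zero $2$-flow, which one sees explicitly by partitioning $E(\Gamma)$ as follows: each non-involution $s\in S$ contributes the $2$-factor $\{\{g,gs\}:g\in G\}$, a disjoint union of cycles of length $\mathrm{ord}(s)\ge 3$, and each pair of distinct involutions $s,t\in S$ contributes a spanning union of even cycles alternating $s$- and $t$-edges; every such piece carries a nowhere-zero $2$-flow. So only valency $5$ remains, and there $S$ contains an odd number of involutions, hence has one of the shapes $\{s,t^{\pm1},u^{\pm1}\}$, $\{s_1,s_2,s_3,t^{\pm1}\}$, or $\{s_1,\dots,s_5\}$, with the $s_i$ involutions.

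In each of these shapes the first attempt is to strip off an even part carrying a $2$-flow — a $2$-factor on a non-involution pair, or the even cycles on a pair of involutions — leaving a spanning cubic subgraph $\Cay(H,T)$ with $H\le G$. A cubic graph has a nowhere-zero $3$-flow exactly when it is bipartite, which for such a Cayley graph means exactly that $H$ has a normal subgroup of index $2$ avoiding every element of $T$ (equivalently, a homomorphism $H\to\Z_2$ nonzero on all of $T$); one tries to produce such a subgroup from the nilpotent structure of $H$. When no choice of the stripped-off even part leaves a bipartite cubic remainder, one descends instead: choose a central subgroup $N$ of order $2$, pass to $\ol\Gamma=\Cay(G/N,\ol S)$ on the smaller nilpotent group $G/N$, invoke the inductive hypothesis — after checking that the valency has not dropped below $4$, or treating by hand the degenerate cases in which two generators coincide modulo $N$ (there they generate a Klein four-group containing $N$ and still supply a clean even part) — and extend the resulting flow over the two-element fibres.

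The main obstacle is precisely this valency-$5$ analysis, and within it the residual case in which $S$ consists of five involutions and $\Gamma$ is not bipartite, so that $G$ is a $2$-group admitting no edge-decomposition into even and cubic-bipartite pieces: here one has to exploit the detailed normal structure of $2$-groups to set up a working descent, all the more because a nowhere-zero $3$-flow need not lift through an arbitrary cover, so that the central subgroup $N$ and the flow on the quotient must be chosen compatibly with the fibres. The connectivity reduction, the valency-$4$ and valency-$\ge 6$ cases, and the bookkeeping of how $S$ degenerates in a quotient are routine by comparison.
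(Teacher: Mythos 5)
Note first that the paper does not prove this statement at all: Theorem~\ref{thm:cay-nil} is imported verbatim from \cite[Theorem 4.3]{Nanasiova}, so there is no in-paper proof to compare yours against, and I am judging your proposal on its own terms. The sound parts of your plan are the reduction to connected graphs, the superposition principle for an edge-partition into pieces each carrying a nowhere-zero $3$-flow, the disposal of even valency via a nowhere-zero $2$-flow, the disposal of valency at least six via Watkins' edge-connectivity bound together with Theorem~\ref{thm:LTWZ} (legitimate here, though anachronistic relative to the 2009 source, whose authors did not have this tool), and the enumeration of the three possible shapes of $S$ in valency five.

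The problem is that valency five is the entire content of the theorem, and your proposal does not prove it; it names two strategies and then concedes that both can fail at once. Concretely: (i) the ``strip off an even part and hope the cubic remainder is bipartite'' step breaks whenever an involution of $S$ lies in every index-$2$ subgroup of $G$ --- for example in $\Cay(Q_8,\{-1,i,-i,j,-j\})$ the unique involution $-1$ lies in the Frattini subgroup, so neither cubic remainder $\Cay(Q_8,\{-1,i,-i\})$ nor $\Cay(Q_8,\{-1,j,-j\})$ is bipartite, and stripping both $2$-factors leaves only a perfect matching; (ii) the fallback ``descend to $G/N$ for a central $N$ of order $2$ and extend the flow over the fibres'' is precisely where a proof is required and none is given. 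In general $\Cay(G,S)$ is \emph{not} a multicover of $\Cay(G/N,\ol S)$ in the sense needed for Lemma~\ref{lem:multi}: the number of elements of $S$ in a coset $sN$ varies between $1$ and $2$ from one coset to another, and an element of $S$ may lie in $N$ itself, destroying the independence of the blocks; so the lifting lemma does not apply and you supply no substitute. Your own closing sentences --- that $N$ and the quotient flow ``must be chosen compatibly with the fibres'' and that the five-involution non-bipartite case requires ``the detailed normal structure of $2$-groups'' --- restate the difficulty rather than resolve it. As written, your argument establishes the theorem only for even valency and valency at least six, where nilpotency plays no role, and leaves the genuinely hard case open.
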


It would be nice if one can prove Tutte's 3-flow conjecture for all vertex-transitive graphs. As a step towards this, we prove the following result in the present paper. 

\begin{thm}
\label{thm:vt-3}
Every regular graph of valency at least four admitting a solvable arc-transitive group of automorphisms admits a nowhere-zero 3-flow. 
\end{thm}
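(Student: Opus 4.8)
The plan is to reduce everything to valency exactly $5$ and then induct. Since every vertex-transitive graph of valency $d$ is $d$-edge-connected \cite{Watkins}, Theorem~\ref{thm:LTWZ} settles the case $d\ge 6$, and a regular graph of even valency carries a nowhere-zero $2$-flow; so I may assume $\Gamma$ is connected of valency $5$, that $G\le\Aut(\Gamma)$ is solvable and arc-transitive, and argue by induction on $|V(\Gamma)|$. Here $G_v$ is transitive on the five neighbours of $v$, so the induced group $G_v^{\Gamma(v)}$ is a transitive solvable permutation group of degree $5$, hence one of $\Z_5$, $D_5$, or the Frobenius group of order $20$; in particular $G_v^{\Gamma(v)}$ has no nontrivial normal $p$-subgroup unless $p=5$, when its unique such subgroup is the (transitive) Sylow $5$-subgroup.

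Let $N$ be a minimal normal subgroup of $G$; since $G$ is solvable, $N\cong\Z_p^k$ is elementary abelian. If $N$ is transitive on $V(\Gamma)$, then $N$ is regular (an abelian transitive group is regular), so $\Gamma$ is a Cayley graph of valency $5$ on the abelian group $N$, and Theorem~\ref{thm:cay-abelian} applies.

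So assume $N$ is intransitive. First I would show $N$ is semiregular. The image of $N_v:=N\cap G_v\trianglelefteq G_v$ in $G_v^{\Gamma(v)}$ is a normal $p$-subgroup, hence trivial unless $p=5$ and it is the Sylow $5$-subgroup of $G_v^{\Gamma(v)}$; in that exceptional case $N_v$ is transitive on $\Gamma(v)$, and since $N_u$ is $G$-conjugate to $N_v$ for each $u\in\Gamma(v)$, connectedness of $\Gamma$ forces $N$ to be transitive, a contradiction. Hence $N_v$ fixes $\Gamma(v)$ pointwise, and connectedness then gives $N_v=1$; thus $N$ is semiregular, its orbits $B_1,\dots,B_m$ ($m\ge 2$) form a block system for $G$, and all edges of $\Gamma$ join distinct blocks. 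If $m=2$, then $\Gamma$ is bipartite; being $5$-regular bipartite, by König's theorem it decomposes into perfect matchings $M_1,\dots,M_5$, and orienting $M_1\cup M_2\cup M_3$ from one part to the other, orienting the (even) cycles of $M_4\cup M_5$ cyclically, and giving every edge the value $1\in\Z_3$ produces a nowhere-zero $\Z_3$-flow (the in/out imbalance at each vertex is $\pm 3$); by Tutte's theorem that nowhere-zero $3$-flows and nowhere-zero $\Z_3$-flows coexist, $\Gamma$ has a nowhere-zero $3$-flow. If $m\ge 3$, then $G_v$ is transitive on $\Gamma(v)$ and on the blocks meeting it, so $\Gamma(v)$ meets each such block in the same number $r$ of vertices with $r\mid 5$; moreover every element of $N$ fixes each block setwise, so all vertices in a block meet the same blocks. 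Since $r=5$ would give $\Gamma_N=K_2$ and $m=2$, we have $r=1$, so the natural map $\pi\colon\Gamma\to\Gamma_N$ onto the quotient modulo the $N$-orbits is a covering projection onto a $5$-regular graph, and the image of $G$ in $\Aut(\Gamma_N)$ is solvable and arc-transitive. As $|V(\Gamma_N)|=m<|V(\Gamma)|$, by the induction hypothesis $\Gamma_N$ has a nowhere-zero $3$-flow; lifting a realizing orientation along $\pi$ and composing the flow with $\pi$ gives a nowhere-zero $3$-flow on $\Gamma$, because at each vertex $\pi$ induces a head- and tail-preserving bijection of incident edges, so flow conservation is preserved.

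The heart of the argument, and the likeliest source of difficulty, is the intransitive non-bipartite case: one must pin down that $N$ is semiregular (this is exactly where the classification of transitive solvable groups of degree $5$ is used), that the multiplicity $r$ must be $1$ so that $\pi$ is a genuine covering onto a graph that is again $5$-regular with a solvable arc-transitive group, and that nowhere-zero flows pull back along covering projections. The bipartite case cannot be handled by quotienting ($\Gamma_N=K_2$ carries no nowhere-zero flow) and requires the separate matching-decomposition argument, while the transitive case is immediate from Theorem~\ref{thm:cay-abelian}.
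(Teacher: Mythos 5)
Your overall strategy is sound and genuinely different from the paper's, but there is one incorrect step, in the semiregularity argument. In the exceptional case $p=5$ with $N_v$ transitive on $\Gamma(v)$, you claim that connectedness forces $N$ to be transitive. That is not what follows. What does follow is that for every vertex $v$ the whole neighbourhood $\Gamma(v)$ lies in a single $N$-orbit; hence each $N$-orbit $B$ has a unique ``neighbouring'' orbit $B'$ containing all neighbours of all vertices of $B$, and connectedness then gives either $B=B'$ (so $N$ is transitive) or $V(\Gamma)=B\cup B'$, i.e.\ $N$ has exactly two orbits and $\Gamma$ is bipartite with these orbits as parts. The two-orbit possibility is perfectly consistent with $N$ being intransitive, so no contradiction is reached and semiregularity of $N$ is not established in that case. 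Fortunately the escape is already in your write-up: in the two-orbit case $\Gamma$ is a $5$-regular bipartite graph, and your K\H{o}nig/matching construction of a nowhere-zero $\Z_3$-flow applies verbatim (it uses nothing about $N$). So the repair is to restructure the case analysis: either $N_v=1$ and you proceed as written, or $\Gamma$ is bipartite and you invoke the matching argument directly. With that patch the proof goes through; you should also record explicitly why no edge of $\Gamma$ joins two vertices of the same $N$-orbit (edge-transitivity plus connectedness plus intransitivity of $N$), which you use silently in both the $m=2$ and $m\ge 3$ cases.

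For comparison, the paper sidesteps this issue entirely: it inducts on the derived length of $G$ rather than on $|V(\Gamma)|$, takes any abelian normal subgroup $N$ with $G/N$ of smaller derived length (no minimality, no semiregularity), and replaces your covering projection by the weaker notion of a multicover --- each $\Gamma[P,Q]$ is allowed to be $t$-regular bipartite for any $t\ge 1$ --- together with Lemma~\ref{lem:multi}, which lifts nowhere-zero $k$-flows through multicovers. This removes any need for the classification of transitive solvable groups of degree $5$ and for the local analysis of $N_v$; its price is a slightly more delicate induction statement (valency at least $4$ and not divisible by $3$, so that the quotient valency, being a divisor, is again $1$ or at least $5$). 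Your route, once patched, buys a genuine topological covering, an elementary self-contained treatment of the bipartite case, and avoids the multicover lemma, at the cost of the degree-$5$ group theory.
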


Note that any $G$-arc-transitive graph is necessarily $G$-vertex-transitive and $G$-edge-transitive. On the other hand, any $G$-vertex-transitive and $G$-edge-transitive graph with odd valency is $G$-arc-transitive. (This result is due to Tutte, and its combinatorial proof given in \cite[Proposition 1.2]{Cheng-Oxley} can be easily extended from $G = \Aut(\Ga)$ to a subgroup $G$ of $\Aut(\Ga)$.) Therefore, Theorem \ref{thm:vt-3} is equivalent to the following: For any solvable group $G$, every $G$-vertex-transitive and $G$-edge-transitive graph with valency at least four admits a nowhere-zero 3-flow. 

In Section \ref{sec:proof} we will prove a weaker version (Claim 1) of Theorem \ref{thm:vt-3}, which together with Theorem \ref{thm:LTWZ} implies Theorem \ref{thm:vt-3}. Note that none of Theorems \ref{thm:vt-3} and \ref{thm:cay-nil} is implied by the other, because not every Cayley graph is arc-transitive, and on the other hand an arc-transitive graph is not necessarily a Cayley graph (see e.g. \cite{Biggs, Praeger97}).

At this point we would like to mention a few related results. Alspach and Zhang (1992) conjectured that every Cayley graph with valency at least two admits a nowhere-zero 4-flow. Since every 4-edge-connected graph admits a nowhere-zero 4-flow \cite{Jaeger79}, this conjecture is reduced to the cubic case. Alspach, Liu and Zhang \cite[Theorem 2.2]{ALZ} confirmed this conjecture for cubic Cayley graphs on solvable groups. This was then improved by Nedela and \v{S}koviera \cite{NS} who proved that any counterexample to the conjecture of Alspach and Zhang must be a regular cover over a Cayley graph on an almost simple group. (A group $G$ is {\em almost simple} if it satisfies $T \le G \le \Aut(T)$ for some simple group $T$.) In \cite{P}, Pota\v{c}nik proved that every connected cubic graph that admits a solvable vertex-transitive group of automorphisms is $3$-edge-colourable or isomorphic to the Petersen graph. This is another generalization of the result of Alspach, Liu and Zhang above, because Petersen graph is not a Cayley graph, and for cubic graphs $3$-edge-colourability is equivalent to the existence of a nowhere-zero 4-flow.

It would be pleasing if one can replace arc-transitivity by vertex-transitivity in Theorem \ref{thm:vt-3}. As an intermediate step towards this, one may try to prove that every Cayley graph of valency at least four on a solvable group admits a nowhere-zero 3-flow, thus generalizing Theorem \ref{thm:cay-nil} and the above-mentioned result of Alspach, Liu and Zhang simultaneously.

\section{Preparations}
\label{sec:prep}

We follow \cite{BM76} and \cite{Dixon-Mortimer, Rotman} respectively for graph- and group-theoretic terminology and notation. The derived subgroup of a group $G$ is defined as $G' := [G, G]$, the subgroup of $G$ generated by all commutators $x^{-1}y^{-1}xy$, $x, y \in G$. Define $G^{(0)} := G, G^{(1)} := G'$ and $G^{(i)} := (G^{(i-1)})'$ for $i \ge 1$. A group $G$ is {\em solvable} if there exists an integer $n \ge 0$ such that $G^{(n)} = 1$; in this case the least integer $n$ with $G^{(n)} = 1$ is called the {\em derived length} of $G$. \delete{Equivalently, a solvable group can be defined as a group $G$ that has an abelian series, that is, a normal series $G = G_0 \unrhd G_1 \unrhd \cdots \unrhd G_r = 1$ such that $G_{i-1}/G_i$ is abelian for $i = 1, \ldots, r$.} Solvable groups with derived length 1 are precisely nontrivial abelian groups. In the proof of Theorem \ref{thm:vt-3} we will use the fact that any solvable group contains an abelian normal subgroup with respect to which the quotient group has a smaller derived length. 

All definitions in the next three paragraphs are standard and can be found in \cite[Part Three]{Biggs} or \cite{Praeger97}. 

Let $G$ be a group acting on a set $\Om$. That is, for each $(\a, g) \in \Om \times G$, there corresponds an element $\a^g \in \Om$ such that $\a^1 = \a$ and $(\a^{g})^h = \a^{gh}$ for any $\a \in \Om$ and $g, h \in G$, where $1$ is the identity element of $G$. We say that $G$ is {\em transitive} on $\Om$ if for any $\a, \b \in \Om$ there exists at least one element $g \in G$ such that $\a^g = \b$, and {\em regular} if for any $\a, \b \in \Om$ there exists exactly one element $g \in G$ such that $\a^g = \b$. The group $G$ is {\em intransitive} on $\Om$ if it is not transitive on $\Om$. A partition $\PP$ of $\Om$ is {\em $G$-invariant} if $P^g := \{\a^g: g \in G\} \in \PP$ for any $P \in \PP$ and $g \in G$, and {\em nontrivial} if $1 < |P| < |\Om|$ for every $P \in \PP$. 

Suppose that $\Ga$ is a graph admitting $G$ as a group of automophisms. That is, $G$ acts on $V(\Ga)$ (not necessarily faithfully) such that, for any $\a, \b \in V(\Ga)$ and $g \in G$, $\a$ and $\b$ are adjacent in $\Ga$ if and only if $\a^g$ and $\b^g$ are adjacent in $\Ga$. (If $K$ is the {\em kernel} of the action of $G$ on $V(\Ga)$, namely, the subgroup of all elements of $G$ that fix every vertex of $\Ga$, then $G/K$ is isomorphic to a subgroup of the automorphism group $\Aut(\Ga)$ of $\Ga$.) We say that $\Gamma$ is \emph{$G$-vertex-transitive} if $G$ is transitive on $V(\Gamma)$, and {\em $G$-edge-transitive} if $G$ is transitive on the set of edges of $\Ga$. If $\Ga$ is $G$-vertex-transitive such that $G$ is also transitive on the set of arcs of $\Ga$, then $\Ga$ is called {\em $G$-arc-transitive}, where an {\em arc} is an ordered pair of adjacent vertices.  

Let $\Ga$ be a graph and $\PP$ a partition of $V(\Ga)$. The \emph{quotient graph} of $\Ga$ with respect to $\PP$, denoted by $\Ga_{\PP}$, is the graph with vertex set $\PP$ in which $P, Q \in \PP$ are adjacent if and only if there exists at least one edge of $\Ga$ joining a vertex of $P$ and a vertex of $Q$. For blocks $P, Q \in \PP$ adjacent in $\Ga_{\PP}$, denote by $\Ga[P, Q]$ the bipartite subgraph of $\Ga$ with vertex set $P \cup Q$ whose edges are those of $\Ga$ between $P$ and $Q$. In the case when all blocks of $\PP$ are independent sets of $\Ga$ and $\Ga[P, Q]$ is a $t$-regular bipartite graph for each pair of adjacent $P, Q \in \PP$, where $t \ge 1$ is an integer independent of $(P, Q)$, we say that $\Ga$ is a \emph{multicover} of $\Ga_{\PP}$. A multicover with $t = 1$ is thus a topological cover in the usual sense. In the proof of Theorem \ref{thm:vt-3}, we will use the following lemma in the case when $k = 3$. 
 
\begin{lem}
\label{lem:multi}
Let $k \ge 2$ be an integer. If a graph admits a nowhere-zero $k$-flow, then its multicovers all admit a nowhere-zero $k$-flow.
\end{lem}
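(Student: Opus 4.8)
\emph{Proof proposal.}
The plan is to lift a nowhere-zero $k$-flow from the base graph $\Gamma_{\PP}$ to the multicover $\Gamma$ in the obvious way, using the constancy of $t$ to make the conservation law at a vertex of $\Gamma$ collapse to the conservation law at the corresponding block. Fix an orientation of $\Gamma_{\PP}$ and let $g$ be a nowhere-zero $k$-flow on $\Gamma_{\PP}$ with respect to it; such a $g$ exists by hypothesis, and recall from Section~\ref{sec:int} that the existence of a nowhere-zero $k$-flow does not depend on the chosen orientation, so we are free to orient $\Gamma$ conveniently. Since every block of $\PP$ is an independent set of $\Gamma$, each edge of $\Gamma$ joins vertices in two distinct blocks $P,Q$, which are therefore adjacent in $\Gamma_{\PP}$; write $e_{PQ}$ for the corresponding edge of $\Gamma_{\PP}$. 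Hence $E(\Gamma)$ is partitioned into the edge sets of the bipartite subgraphs $\Gamma[P,Q]$ over all edges $PQ$ of $\Gamma_{\PP}$.

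First I would orient $\Gamma$ compatibly with $\Gamma_{\PP}$: for each edge $PQ$ of $\Gamma_{\PP}$ oriented from $P$ to $Q$, orient every edge of $\Gamma[P,Q]$ from its end in $P$ to its end in $Q$. By the previous paragraph this defines an orientation of the whole of $\Gamma$. Then define $f\colon E(\Gamma)\to\{0,\pm1,\ldots,\pm(k-1)\}$ by $f(e):=g(e_{PQ})$ for every edge $e$ of $\Gamma[P,Q]$. As $g$ is nowhere zero and takes values in $\{0,\pm1,\ldots,\pm(k-1)\}$, the same is true of $f$, so it only remains to check that $f$ obeys the conservation law.

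For this, take any vertex $v$ of $\Gamma$, say $v\in P$. The edges of $\Gamma$ incident with $v$ split according to which neighbour $Q$ of $P$ in $\Gamma_{\PP}$ contains their other end, and since $\Gamma[P,Q]$ is $t$-regular --- this is precisely where it matters that $t$ is one fixed integer for all pairs --- exactly $t$ of them join $v$ to $Q$, each bearing the common value $g(e_{PQ})$ and each oriented out of $v$ when $e_{PQ}$ is oriented $P\to Q$ and into $v$ otherwise. Consequently the net outflow of $f$ at $v$ equals $t$ times the net outflow of $g$ at $P$, which is $0$ because $g$ is a flow on $\Gamma_{\PP}$. Thus $f$ is a nowhere-zero $k$-flow on $\Gamma$.

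I do not expect a genuine obstacle here: the content is entirely in the correspondence of orientations and in the sign bookkeeping of the last step, and the one conceptual point worth highlighting is that it is the constancy of $t$ (rather than mere regularity of each $\Gamma[P,Q]$) that lets the vertex balance at $v$ be read off as a scalar multiple of the block balance at $P$.
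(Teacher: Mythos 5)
Your proposal is correct and follows essentially the same route as the paper: orient each bipartite piece $\Gamma[P,Q]$ consistently with the oriented edge $(P,Q)$ of the quotient, copy the flow value onto all $t$ parallel-oriented edges, and observe that the balance at a vertex of $\Gamma$ is $t$ times the balance at its block. Your remark that the constancy of $t$ across pairs of blocks is what makes the vertex balance a scalar multiple of the block balance is the same (implicit) point the paper relies on.
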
  
\begin{proof}
Using the notation above, let $\Ga$ be a multicover of $\Si := \Ga_{\PP}$. Suppose that $\Si$ admits a nowhere-zero $k$-flow $f$ (with respect to some orientation). For each oriented edge $(P, Q)$ of $\Si$, orient the edges of the $t$-regular bipartite graph $\Ga[P, Q]$ in such a way that they all have tails in $P$ and heads in $Q$, and then assign $f(P, Q)$ to each of them. Denote this nowhere-zero function on the oriented edges of $\Ga$ by $g$, and denote the oriented edge of $\Ga$ with tail $\a$ and head $\b$ by $(\a, \b)$. It can be verified that, for any $P \in \PP$ and $\a \in P$, $\sum_{(\a, \b) \in E^+_{\Ga}(\a)} g(\a, \b) = \sum_{(P, Q) \in E^+_{\Si}(P)} t \cdot f(P, Q)$ and $\sum_{(\b, \a) \in E^-_{\Ga}(\a)} g(\b, \a) = \sum_{(Q, P) \in E^-_{\Si}(P)} t \cdot f(Q, P)$. Since $f$ is a nowhere-zero $k$-flow in $\Si$, for every $P \in \PP$, we have 
$$\sum_{(P, Q) \in E^+_{\Si}(P)} f(P, Q) = \sum_{(Q, P) \in E^-_{\Si}(P)} f(Q, P).$$  Therefore, for every $\a \in V(\Ga)$, we have $$\sum_{(\a, \b) \in E^+_{\Ga}(\a)} g(\a, \b) = \sum_{(\b, \a) \in E^-_{\Ga}(\a)} g(\b, \a)$$ and so $g$ is a nowhere-zero $k$-flow in $\Ga$. 
\end{proof}

If $\Ga$ is a $G$-vertex-transitive graph, then for any normal subgroup $N$ of $G$, the set $\PP_N := \{\a^N: \a \in V(\Ga)\}$ of $N$-orbits on $V(\Ga)$ is a $G$-invariant partition of $V(\Ga)$, called a {\em $G$-normal partition} of $V(\Ga)$ \cite{Praeger97}, where $\a^N := \{\a^g: g \in N\}$. Denote the corresponding quotient graph by $\Ga_N := \Ga_{\PP_N}$. The quotient group $G/N$ induces an action on $\PP_N$ defined by $(\a^N)^{Ng} = (\a^g)^{N}$. The following observations can be easily proved (see e.g. \cite{Praeger97}).  
 
\begin{lem}
\label{lem:vt-bi}
(\cite{Praeger97}) 
Let $\Ga$ be a connected $G$-vertex-transitive graph, and $N$ a normal subgroup of $G$ that is intransitive on $V(\Ga)$. Then the following hold:
\begin{itemize}
\item[\rm (a)] $\Ga_{N}$ is $G/N$-vertex-transitive under the induced action of $G/N$ on $\PP_N$;
\item[\rm (b)]  for $P, Q \in \PP_N$ adjacent in $\Ga_{N}$, $\Ga[P, Q]$ is a regular subgraph of $\Ga$; 
\item[\rm (c)]  if in addition $\Ga$ is $G$-arc-transitive, then $\Ga_{N}$ is $G/N$-arc-transitive and $\Ga$ is a multicover of $\Ga_{N}$.
\end{itemize} 
\end{lem}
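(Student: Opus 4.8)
The plan is to verify the three parts in order, observing at the outset that the only place where normality of $N$ is genuinely used is in making the induced $G/N$-action on $\PP_N$ well defined. First I would check that the rule $(\a^N)^{Ng} := (\a^g)^N$ is independent of the coset representative: if $Nh = Ng$ then $h = ng$ with $n \in N$, and since $N$ is normal $ng = gn'$ for some $n' \in N$, so $\a^h = (\a^g)^{n'} \in (\a^g)^N$ and hence $(\a^h)^N = (\a^g)^N$. One then checks directly that this defines an action and that each coset $Ng$ permutes $\PP_N$ preserving adjacency in $\Ga_N$, because the automorphism $g$ carries an edge between blocks $P, Q$ to an edge between $P^g, Q^g$. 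For (a), transitivity of $G$ on $V(\Ga)$ immediately yields transitivity of $G/N$ on $\PP_N$: given blocks $\a^N, \b^N$, pick $g \in G$ with $\a^g = \b$, so that $(\a^N)^{Ng} = \b^N$. In particular $G/N$ permutes the blocks transitively, so all blocks of $\PP_N$ have the same cardinality, a fact I will reuse in (b).

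For (b), fix adjacent blocks $P, Q \in \PP_N$. Since $P$ and $Q$ are $N$-orbits, $N$ fixes each of them setwise and acts transitively on each; moreover every element of $N$, being an automorphism of $\Ga$ that fixes $P$ and $Q$ setwise, restricts to an automorphism of the bipartite graph $\Ga[P, Q]$. Hence any two vertices of $P$ have equal degree in $\Ga[P, Q]$ (map one to the other by an element of $N$, which carries $Q$-neighbours to $Q$-neighbours), and similarly for $Q$; thus $\Ga[P, Q]$ is biregular, say with part-degrees $s$ on the $P$-side and $t$ on the $Q$-side. Counting edges from each side gives $|P|\cdot s = |Q|\cdot t$, and since $|P| = |Q|$ by the remark in the previous paragraph, $s = t$, so $\Ga[P, Q]$ is regular.

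For (c), assume in addition that $\Ga$ is $G$-arc-transitive. First, $\Ga_N$ is $G/N$-arc-transitive: an arc $(P, Q)$ of $\Ga_N$ arises from an edge of $\Ga$ with ends $u \in P$, $v \in Q$, hence from an arc $(u, v)$ of $\Ga$; given a second arc $(P', Q')$ arising from $(u', v')$, choose $g \in G$ with $u^g = u'$ and $v^g = v'$, so that $P^{Ng} = (u^N)^{Ng} = (u^g)^N = (u')^N = P'$ and likewise $Q^{Ng} = Q'$. The substantive point is that every block is an independent set, and this is where I expect the main difficulty. The idea is a dichotomy: if some arc $(u, v)$ of $\Ga$ has both ends in one block, i.e. $u^N = v^N$, then for any other arc $(u', v') = (u^g, v^g)$ one gets $(u')^N = (u^N)^{Ng} = (v^N)^{Ng} = (v')^N$, so by arc-transitivity \emph{every} arc would have both ends in a single block. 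In that case every edge of $\Ga$ lies inside a block, so there are no edges between distinct blocks; but $N$ is intransitive, so there are at least two blocks, contradicting the connectedness of $\Ga$. Hence no edge lies inside a block, i.e. each block is independent. Finally, by (b) each $\Ga[P, Q]$ is $t_{P,Q}$-regular, and the $G/N$-edge-transitivity of $\Ga_N$ (a consequence of the arc-transitivity just proved) supplies, for any two adjacent pairs, an element of $G$ inducing an isomorphism $\Ga[P, Q] \to \Ga[P', Q']$; thus $t_{P,Q}$ equals a constant $t$, and $\Ga$ is a multicover of $\Ga_N$.

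The main obstacle is precisely the independence of the blocks in (c): everything else is bookkeeping with the group action, but ruling out edges inside a block genuinely requires the interplay of arc-transitivity (for the all-or-nothing dichotomy), normality of $N$ (so that $G/N$ acts on $\PP_N$ at all), intransitivity of $N$ (to guarantee more than one block), and connectedness of $\Ga$ (to force an edge across blocks). I would make these four ingredients explicit at that step.
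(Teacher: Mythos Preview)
Your argument is correct in all three parts. Note, however, that the paper does not supply its own proof of this lemma: it is stated as a known fact with a reference to Praeger's survey, preceded by the remark that ``the following observations can be easily proved.'' So there is no proof in the paper to compare against; your write-up simply fills in the details the authors chose to omit. The one place worth tightening is the well-definedness check at the start: you verify that the action is independent of the coset representative $Ng$, but strictly speaking one should also note independence of the orbit representative $\a$ (if $\a^N = \b^N$ then $(\a^g)^N = (\b^g)^N$), which follows by the same normality argument. Otherwise the bookkeeping is sound, and your identification of the four ingredients (arc-transitivity, normality, intransitivity, connectedness) needed for the block-independence step in (c) is exactly right.
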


\section{Proof of Theorem \ref{thm:vt-3}}
\label{sec:proof}

By Theorem \ref{thm:LTWZ}, in order to prove Theorem \ref{thm:vt-3} it suffices to prove that, for any finite solvable group $G$, every $G$-arc-transitive graph of valency five admits a nowhere-zero 3-flow. We will prove the following seemingly stronger but equivalent result: 

\medskip
\textbf{Claim 1.}~ For any solvable group $G$, every $G$-arc-transitive graph with valency at least four and not divisible by three admits a nowhere-zero 3-flow. 
\medskip

We will prove this by induction on the derived length of the solvable group $G$, using Theorem \ref{thm:cay-abelian} as the base case. We will use the following fact \cite[Lemma 16.3]{Biggs}: a graph is isomorphic to a Cayley graph if and only if its automorphism group contains a subgroup that is regular on the vertex set. Denote by $\val(\Ga)$ the valency of a regular graph $\Ga$. 

Without loss of generality we may assume that the solvable group $G$ is faithful on the vertex set of the graph under consideration for otherwise we can replace $G$ by its quotient group (which is also solvable) by the kernel of $G$ on the vertex set. Under this assumption $G$ is isomorphic to a subgroup of the automorphism group of the graph. We may also assume that the graph under consideration is connected (for otherwise we consider its components). We make induction on the derived length $n(G)$ of $G$. 

Suppose that $n(G) = 1$ and $\Ga$ is a $G$-arc-transitive graph with $\val(\Ga) \ge 4$. Then $G$ is abelian and so is regular on $V(\Ga)$. (A transitive abelian group must be regular.) Since $G$ is isomorphic to a subgroup of $\Aut(\Ga)$, it follows that $\Ga$ is isomorphic to a Cayley graph on $G$. Thus, by Theorem \ref{thm:cay-abelian}, $\Ga$ admits a nowhere-zero 3-flow.

Assume that, for some integer $n \ge 1$, the result (in Claim 1) holds for any solvable group of derived length at most $n$. Let $G$ be a solvable group with derived length $n(G) = n+1$. Let $\Ga$ be a connected $G$-arc-transitive graph such that $\val(\Ga) \ge 4$ and $\val(\Ga)$ is not divisible by $3$. If $\val(\Ga)$ is even, then $\Ga$ admits a nowhere-zero 2-flow and hence a nowhere-zero 3-flow. So we assume that $\val(\Ga) \ge 5$ is odd. Since $3$ does not divide $\val(\Ga)$ by our assumption, every prime factor of $\val(\Ga)$ is no less than $5$. Since $G$ is solvable, it contains an abelian normal subgroup $N$ such that the quotient group $G/N$ has derived length at most $n(G) - 1 = n$. Note that $G/N$ is solvable (as any quotient group of a solvable group is solvable) and $N \ne 1$ (for otherwise $G/N \cong G$ would have derived length $n(G)$). If $N$ is transitive on $V(\Ga)$, then it is regular on $V(\Ga)$ as $N$ is abelian. In this case $\Ga$ is isomorphic to a Cayley graph on $N$ and so admits a nowhere-zero 3-flow by Theorem \ref{thm:cay-abelian}. 

In what follows we assume that $N$ is intransitive on $V(\Ga)$. By Lemma \ref{lem:vt-bi}, $\Ga_{N}$ is a connected $G/N$-arc-transitive graph, and $\Ga$ is a multicover of $\Ga_{N}$. Thus $\val(\Ga_{N})$ is a  divisor of $\val(\Ga)$ and so is not divisible by $3$. If $\val(\Ga_{N}) = 1$, then $\Ga$ is a regular bipartite graph of valency at least two and so admits a nowhere-zero 3-flow \cite{BM76}. Assume that $\val(\Ga_{N}) > 1$. Then $\val(\Ga_{N}) \ge 5$ and every prime factor of $\val(\Ga_{N})$ is no less than $5$. Thus, since $G/N$ is solvable of derived length at most $n$, by the induction hypothesis, $\Ga_{N}$ admits a nowhere-zero 3-flow. Since $\Ga$ is a multicover of $\Ga_{N}$, by Lemma \ref{lem:multi}, $\Ga$ admits a nowhere-zero 3-flow. This completes the proof of Claim 1 and hence the proof of Theorem \ref{thm:vt-3}.

\end{document}